\documentclass[a4paper, 11pt,reqno]{amsart}
\usepackage[T1]{fontenc}
\usepackage[utf8]{inputenc}

\usepackage{amssymb,latexsym,amsmath,epsfig,amsthm,mathtools,tikz,hyperref}

\newtheorem{theorem}{Theorem}
\newtheorem{proposition}{Proposition}

\theoremstyle{definition}
\newtheorem{remark}{Remark}

\newcommand{\N}{\mbox{${\mathbb N}$}}
\newcommand{\Z}{\mbox{${\mathbb Z}$}}

\title{Rainbow Trapezoids with Given Area}

\author{Sukumar Das Adhikari}
\address{Department of Mathematics,
  Ramakrishna Mission Vivekananda Educational and Research Institute, India}
\email{adhikarisukumar@gmail.com}

\author{T\'assio Naia}
\address{Centre de Recerca Matem\`atica, Barcelona, Spain}
\email{tnaia@crm.cat}

\author{Oriol Serra}
\address{Department of Mathematics, Universitat Polit\`ecnica de  Catalunya, Barcelona, Spain}
\email{oriol.serra@upc.edu}

\thanks{A large part of this work was done while the first author was
  visiting the Department of Mathematics, UPC, Barcelona and he would
  like to thank the Institute for its hospitality. The second author
  was supported by the European Union's Horizon Europe Marie
  Sk\l{}odowska-Curie grant PARTIORI, project number 101106032
  \raisebox{-.2ex}{\includegraphics[height=2.2ex]{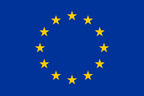}}. The
  third author acknowledges support from the Spanish Research Agency
  under project PID2023-147202NB-I00 funded by
  MICIU/AEI/10.13039/501100011033.}

\date{February 2026}

\begin{document}

\begin{abstract}
  A well-known result by Graham in Euclidean Ramsey Theory states
  that, for every positive real number $A$, every coloring of the
  plane with finite number of colors contains a monochromatic triangle
  of area $A$. We consider canonical versions of this result. We show
  that every $3$-coloring of the plane integer lattice contains either
  a rainbow triangle of area $1/2$ or a monochromatic rectangle of any
  given area whose sides are parallell to the axes. We also show that,
  under natural conditions, there are numbers $A$ and $B$ such that
  every coloring of the plane integer lattice contains either a
  monochromatic rectangle of area $A$ or a rainbow trapezoid of area
  $B$. As usual, only vertex colors are considered: e.g., a
  monochromatic rectangle is a set of four points in the lattice which
  a) are the vertices of a rectangle and b) are assigned the same
  color.

\smallskip
\noindent
{\bf Keywords:} Ramsey theory, Euclidean Gallai--Ramsey theory

\noindent
{\bf MSC:} 05C55, 05D10
\end{abstract}

\maketitle

\section{Introduction}

\subsection{Euclidean Ramsey Theory}
The subject of Euclidean Ramsey Theory was developed in the pioneering papers
(\cite{EGMRSS}, \cite{EGMRSS1}, \cite{EGMRSS2}) of
Erd\H{o}s, Graham, Montgomery, Rothschild, Spencer and Straus.
Here, given a finite set $C\subset E^n$, where $E^n$ is the $n$ dimensional Euclidean space,
we shall say that ${\mathcal R}(C,n,r)$
holds if, for any $r$-coloring of $E^n$, there exists a monochromatic set
$C'\subset E^n$ such that $C'$ is congruent to $C$.
By saying that $C'$ is \emph{congruent} to $C$, one means that $C'$ is the
image of $C$ under some element in the group of Euclidean motions of $E^n$.

Given a finite subset $C$ in some Euclidean space, one says
that $C$ is \emph{ Ramsey} if for every positive integer
$r$, there is a positive integer $n=n(C,r)$ such that ${\mathcal R}(C,n,r)$
holds. Depending on the given finite Ramsey set $C$, one has to go for a suitably large $n$.
For example, let $P$ denote a set of two points at a distance $1$ apart.
Then  it is easy to see that ${\mathcal R}(P,1,2)$ does not hold.
Whereas, in dimension $2$, considering $2$-colorings of the vertices of
an equilateral triangle with sides of length $1$, we can
observe that ${\mathcal R}(P,2,2)$ holds.
For the early results in this area and further information, we would like to refer to~\cite{gra1} and
\cite{Gr2}.

\subsection{Rainbow structures}
Let $\chi$ be a coloring of a set $X$.
We say a subset $Y\subset X$ is \emph{rainbow} under~$\chi$ if $\bigl|\chi(Y)\bigr|=\min \bigl\{|\chi(X)|, |Y|\bigr\}$.
In other words, $Y$ is rainbow if either it has no two elements with the same color, or if all colors in $\chi(X)$ are
present in~$Y$.

The study of rainbow arithmetic structures was introduced by Jungi\'c, Licht, Mahdian, Ne\v set\v ril and~Radoi\v ci\'c~\cite{JLMNR2001}
(see also one of the early articles in the area~\cite{jnr}).
The existence of such rainbow structures requires some properties of the coloring.
By settling  a conjecture in~\cite{JLMNR2001},
Axenovich and Fon der Flaas~\cite{af2004} proved that every $3$-coloring of $[N]$  such that every color class has at least
$(N+4)/6$ colors contains a rainbow $3$-term arithmetic progression.  More precisely,

\begin{theorem}[{Axenovich, Fon der Flaas, \cite{af2004}}]\label{thm-af}
For every $N\ge 6$ every $3$-coloring of~$[N]$ such that each color
class has cardinality at least
\[
|C|>
\begin{cases}
  \lfloor \frac{n+2}{6}\rfloor& \text{if $n\not\equiv 0 \pmod 6$;}\\
  \hfil\frac{n+4}{6} & \text{otherwise,}
\end{cases}
\]
contains a rainbow $3$-term arithmetic progression.
\end{theorem}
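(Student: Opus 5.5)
The plan is to argue by contradiction. Fix a $3$-coloring $\chi\colon[N]\to\{R,G,B\}$ with no rainbow $3$-term arithmetic progression, and show that some color class has at most the stated size. The basic tool is the \emph{forcing rule}. If $\chi(x)\ne\chi(y)$, then each of $2x-y$, $2y-x$ and (when $x+y$ is even) $(x+y)/2$ that lies in $[N]$ avoids the third color. So every bichromatic pair forbids one color at up to three further points.

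First I will study \emph{color changes}, i.e.\ indices $i$ with $\chi(i)\neq\chi(i+1)$. Say $\chi(i)=R$ and $\chi(i+1)=G$. Then $i-1$ and $i+2$ are not blue. Applying the rule again to the new bichromatic pairs, for example $(i-1,i+1)$ and $(i,i+2)$, I expect to show by a short case analysis that the third color $B$ is excluded from a window of bounded length around $i$. The aim is a window of the form $[i-2,i+3]$, giving six consecutive positions, or at least a set of positions of density $5/6$ around every $R/G$ change.

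Second, I take a color of minimum size, say $B$, and look at a blue element $b$. Its neighbours $b\pm1$ (when they exist) are either blue or non-blue. If both are red, or both green, the AP $(b-1,b,b+1)$ is harmless. If one is red and one green, we get a rainbow AP immediately, which is impossible. So around each maximal blue block the surrounding color is locally constant. Combining this with the first step, the blue blocks and the $R/G$ changes must be separated by gaps of length at least about $6$. Since red and green both appear, at least one $R/G$ change or some blue/red/green interleaving must occur. A counting argument over $[N]$ then gives $|B|\le (N+4)/6$, with the floor refinement $\lfloor (N+2)/6\rfloor$ coming from the boundary effects at $1$ and $N$ when $6\nmid N$. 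I would organize this count as an induction on $N$: remove a block of six consecutive integers on which the structure is forced and in which the minimal class appears at most once, then check that the restriction is still rainbow-AP-free.

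The main obstacle is the first step. The forcing rule only constrains points at distance comparable to $y-x$, so local information must be propagated carefully, and the parity condition for midpoints creates many branches. If the direct window argument fails, I would fall back on the known structure for $\mathbb{Z}_n$. I would first reduce to the case where one color, say $B$, appears only at positions with a fixed residue modulo some small $m$ (I expect $m=6$ after analysing the gaps). Then I would show that the pattern of $R$ and $G$ on the remaining residues is forced, which pins down the extremal colorings and yields the exact thresholds in the statement.
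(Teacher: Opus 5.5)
The paper gives no proof of this statement; it is quoted from Axenovich and Fon der Flaass. Your plan therefore has to stand on its own, and it has two genuine problems.

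\textbf{The target bound is false as printed.} For $N\equiv 0\pmod 6$ the two printed thresholds give the same condition, so the case split does nothing. In the original theorem the exceptional class is $N\equiv 2\pmod 6$. Here is a counterexample to the printed version. For $N=6m+2$, colour $\{1,3,\dots,2m+1\}$ blue, $\{4m+2,4m+4,\dots,6m+2\}$ green, and everything else red. For blue $b$ and green $g$ we have $(b+g)/2\notin\mathbb{Z}$, $2b-g\le 0$ and $2g-b\ge 6m+3>N$. So no $3$-term progression in $[N]$ contains both a blue and a green point, and hence none is rainbow. Yet every class has at least $m+1>\lfloor (N+2)/6\rfloor$ elements; for $N=8$ the classes are $\{1,3\}$, $\{6,8\}$, $\{2,4,5,7\}$. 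So your ``floor refinement from boundary effects whenever $6\nmid N$'' cannot be proved by any argument. The ends of $[N]$ in fact let the minority colours gain an element when $N\equiv 2\pmod 6$. The same family, for instance blue $=\{1,3,5\}$ in $[14]$, which is extremal for the correct theorem, also refutes your fallback: an extremal minority class need not lie in one residue class mod $6$.

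\textbf{The local lemma behind your count is false, even for the corrected statement.} A bichromatic pair $(i,i+1)$ forbids the third colour only at $i-1$ and $i+2$; nothing forces a blue-free window $[i-2,i+3]$. Take $B=6\mathbb{Z}\cap[N]$, $G=(3+6\mathbb{Z})\cap[N]$, and $R$ the rest.
\begin{itemize}
\item This colouring is rainbow-free. A progression containing residues $0$ and $3$ mod $6$ must have difference $\equiv 3\pmod 6$, since $2d\equiv 3$ is impossible, so its third term is again $\equiv 0$ or $3$.
\item $B$ is a smallest class, of size $\lfloor N/6\rfloor$.
\item For $N\ge 12$: at $\chi(8)=R$, $\chi(9)=G$ the point $6=i-2$ is blue, and at $\chi(9)=G$, $\chi(10)=R$ the point $12=i+3$ is blue.
\end{itemize}
In this extremal colouring every blue point $6k$ lies two steps from the $R/G$ change at $(6k-3,6k-2)$. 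So there is no separation ``of length about $6$'' between blue points and $R/G$ changes, and the counting and induction built on it have no foundation.

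What actually limits the minority classes is a global constraint on how the two minority colours can sit relative to each other. That means spacing of their consecutive elements in the interior, plus end configurations like the one above. Establishing that constraint is the substance of the Axenovich--Fon der Flaass argument, and your sketch does not supply it.

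A smaller slip: ``the colour around a maximal blue block is locally constant'' only follows for blocks of length one. For even $N$, the colouring $R\,B\cdots B\,G$ is rainbow-free.
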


The lower bound on the cardinality  of the smallest color class  {in the above theorem turns out to be
best possible~\cite{JLMNR2001}}. {It is also shown in~\cite{af2004} that an analogous  result is no longer true for
arithmetic progressions of length $r\ge 5$ and $r$ colors, even if every color appears the same number
of times, and in Conlon, Jungi\'c and Radoi\v ci\'c~\cite{cjr2007}
$4$-colorings of $[n]$ were constructed with equinumerous
color classes without any $4$-term arithmetic progression whose elements are colored in distinct colors.}
On the other hand, such equinumerous colorings of $[rn]$ do
contain  rainbow arithmetic progressions of shorter length. For instance, Conlon, Fox and Sudakov~\cite{CFS2020}
have shown that there is a constant $c$ such that every equinumerous coloring of $[rn]$
with $r\ge ck^2\log k$ colors contains a rainbow $k$-term
arithmetic progression. If the condition of equinumerous coloring is dropped, the number of required colors
jumps to
$n^{1-o(1)}$ {for the existence of rainbow $k$-term arithmetic progressions with $k\ge 4$, see Butler
et al.~\cite{B2016}.}

{\subsection{Euclidean Canonical Ramsey} Canonical Ramsey theorems state the existence of some unavoidable
colored patterns  in every coloring. The Euclidean version has only been introduced recently,
see e.g.~Geh\'er,  Sagdeev and T\'oth~\cite{GST2025} and the references therein. Answering a question
in the latter  reference, it has been proved in Fang et al.~\cite{FGSXXY} that every coloring of $E^4$
contains a monochromatic or rainbow congruent copy of any given triangle, and  the existence of a
monochromatic or a rainbow congruent copy of a given rectangle
an $r$-coloring of $E^n$ for $n\geq n_0$ where $n_0$ depends on the rectangle.

\subsection{Gurevich Conjecture}
The problems that we address here are of similar nature, but related to the Gurevich conjecture on
the existence of a monochromatic triangle of a given area in {every finite coloring of the Euclidean plane}.
}

A stronger form of Gurevich's conjecture
was proved by R. L. Graham (\cite{gra2}, \cite{gra1}), which says that
 for any {real}  $\alpha >0$ and any pair of non-parallel
lines $L_1$ and~$L_2$, in any partition of the plane into finitely many
classes, some class contains the vertices of a triangle which has area
$\alpha$ and two sides parallel to the lines $L_i$. Later, a shorter proof, using the main idea of Graham,
was presented in~\cite{ad} and a very simple proof of the original conjecture was given by Dumitrescu and
Jiang~\cite{Jiang} by applying some clever induced coloring.

In~\cite{AC},  for any $\delta >0$, and any finite coloring of the plane,
existence of monochromatic trapezoids of area $\delta$ was established. However, it is known that the
answer to the
problem of the existence of monochromatic rectangle parallelograms is negative, while it is an open problem
for
general parallelograms.

In this note, we take up the corresponding problems related to  rainbow configurations and some canonical results. Among other results, we  show that every coloring of the plane lattice $\Z^2$ with $r\ge 3$ colors contains either a rainbow triangle of area $1/2$ or  monochromatic rectangles of every even area ({Theorem~\ref{t:rb-triangle-vs-even-mon-rects})}.
We also prove that there is an integer $A$ and a point $z\in \Z^2$ such that every coloring of $\Z^2$ contains  either a monochromatic parallelogram of area $A$
or a rainbow trapezoid of area $A$, with $z$ one of its vertices (Theorem \ref{mono-rainbow}).

\section{Rainbow triangles}

{Our first result is a canonical theorem for rainbow triangles and monochromatic rectangles of given area.}

{The following examples show that there are $r$-coloring of $\Z^2$  with $r\ge 3$ colors which avoid
rainbow triangles of area $1/2$ and $r$-colorings which avoid monochromatic rectangles with area $2$.
For the former,
color all of $\Z^2$ with a single color, except for the
points $(0,tm), \; m\in \Z$, to which we assign distinct colors.
The only possible rainbow triangles have area an integer multiple of $t/2$, which can be arbitrarily large so as to avoid any
fixed area, in particular $1/2$. For the latter, color each horizontal line with a single (unique) color such
that every set of three consectuive lines contains 3~distinct colors. The only monochromatic rectangles have area at least
three. The following result shows that no $r$-coloring with at least three colors can  avoid both structures.

 \begin{theorem}\label{t:rb-triangle-vs-even-mon-rects}
      In every $r$-coloring of~$\Z^2$ with $r\ge 3$ colors there is either
      a rainbow triangle of area~$1/2$ or monochromatic rectangles
      of each even area whose sides are parallel to the axes.
    \end{theorem}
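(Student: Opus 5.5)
We argue by contraposition: fix an $r$-coloring $\chi$ of $\Z^2$ using at least three colors and containing no rainbow triangle of area $1/2$. We show it contains, for every $k\ge 1$, a monochromatic axis-parallel rectangle of area $2k$, either of size $1\times 2k$ or of size $2\times k$ (or the transposes).

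\emph{Basic constraint.} For any $x,y,z\in\Z$ the points $(x,y),(x+1,y),(z,y+1)$ span a triangle of area $1/2$, and likewise with $y-1$ in place of $y+1$ and with rows and columns exchanged. So if two horizontally adjacent points of row $y$ carry distinct colors $a\neq b$, every point of rows $y\pm1$ has color in $\{a,b\}$. The same holds for columns.

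\emph{Step 1: some row and some column are non-constant, and a $2$-colored block appears.}
\begin{itemize}
\item Suppose every row is constant. Then all columns are equal to a single sequence, which uses all $\ge 3$ colors of $\chi$, so it is non-constant. Applying the column version of the constraint to the adjacent column (an identical copy) forces it into two colors, which is a contradiction.
\item The symmetric argument shows some column is non-constant.
\item Take a non-constant row $y$ with an adjacent pair colored $a,b$. Then rows $y-1$ and $y+1$ use only colors from $\{a,b\}$.
\item If one of these rows is non-constant, its adjacent pairs are necessarily $\{a,b\}$, and the two-coloring propagates further.
\end{itemize}
Running this both vertically and horizontally, I expect to show the following. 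Either the whole plane is colored from two colors except along a very restricted set of lines, which is impossible since a third color $c$ must appear somewhere, or there exist three consecutive rows (or columns) colored with only $\{a,b\}$, at least one of them non-constant. The point $c$ must be such that its row-neighbors and column-neighbors do not create rainbow triples with the $\{a,b\}$ rows. Exploiting this should pin $c$ to sit on constant rows/columns, which then limits where $a,b$ occur.

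\emph{Step 2: pigeonhole inside a two-colored strip.} Suppose rows $y,y+1,y+2$ are colored from $\{a,b\}$. For a fixed $k$, look at the columns $x, x+k, x+2k,\dots$. Each vertical segment of three points has one of $8$ patterns in $\{a,b\}^3$.
\begin{itemize}
\item A $2\times k$ rectangle with vertices in rows $y$ and $y+2$ at columns $x$ and $x+k$ is monochromatic exactly when the two columns agree in rows $y$ and $y+2$ and those two entries are equal. Every $3$-pattern has its rows-$y,y+2$ entries either equal or unequal.
\item A $1\times 2k$ rectangle uses rows $y,y+1$ at columns $x,x+2k$.
\item So we look for two columns at distance $k$ or $2k$ that share a monochromatic pair, either in rows $y$ and $y+2$ or in rows $y$ and $y+1$.
\end{itemize}
Patterns that avoid all such pairs are of the form $(a,b,\ast)$, $(b,a,\ast)$ with unequal outer entries. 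The no-rainbow constraint, propagated along the non-constant row, should forbid an infinite progression from consisting only of such mixed patterns. The reason is that a mixed column plus an adjacent-column mixed pair pins down neighbors and forces periodic structure.

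I expect this last point to be the main obstacle: the case analysis of mixed columns and of where the third color sits. My first attempt would be to show that in a mixed configuration the row $y$ restricted to the progression $x+k\Z$ must alternate. Alternating gives equal colors at distance $2k$. I would then use the column-direction constraint to make row $y+1$ agree at those same positions, yielding the $1\times 2k$ monochromatic rectangle.
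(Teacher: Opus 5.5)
There is a genuine gap. Your basic constraint is correct, and so is the argument that some row is non-constant. But both load-bearing steps after that are only expectations (``I expect to show'', ``should pin'', ``should forbid''), and Step~2 cannot work as described.

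Pigeonhole over the columns $x,x+k,x+2k,\dots$ only gives two columns at distance $mk$ for some uncontrolled $m$, not exactly $k$ or $2k$. Worse, a two-colored strip of three rows forces nothing by itself. Take $k=1$, let row $y$ be periodic $aabb$ and let rows $y+1,y+2$ be periodic $bbaa$. Then every column is ``mixed'', row $y$ does not alternate, and these three rows contain no monochromatic rectangle of area~$2$. What rules this configuration out is global. Every column has a vertical $\{a,b\}$-pair in rows $y,y+1$, so by the column constraint the whole plane is $\{a,b\}$-colored, which contradicts the presence of a third color. That global use of the third color is exactly the part you leave open.

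Your proposed finish is also misdirected. In a mixed column, rows $y$ and $y+1$ carry different colors, so a $1\times 2k$ rectangle on rows $y,y+1$ through such columns is never monochromatic; at best you would need rows $y+1,y+2$.

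The missing idea is to locate \emph{constant} rows rather than to pigeonhole, since a constant row yields rectangles of every width at once. The paper splits according to how many colors lie on one row, on two consecutive rows, and on three consecutive rows.
\begin{itemize}
\item A row with at least three colors has adjacent pairs colored $\{a,b\}\ne\{a',b'\}$. By your constraint, both neighbouring rows are then constant with the unique common color, which gives $2\times k$ rectangles for all $k$.
\item If no row has three colors but two consecutive rows do, a rainbow triangle is immediate.
\item If three consecutive rows $\ell_1,\ell_2,\ell_3$ do, one reduces to $\chi(\ell_1)=\{a,b\}$, $\chi(\ell_2)=\{a\}$, $\chi(\ell_3)=\{a,c\}$, and finds a column $\nu_1$ colored $a$ on all three rows. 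For any other column $\nu'$, either $\chi(\nu'\cap\ell_1)=a$, giving a rectangle on $\ell_1,\ell_2$. Or it is $b$, and then $\chi(\nu'\cap\ell_3)=a$, giving a rectangle on $\ell_2,\ell_3$; otherwise $\nu'$ carries three colors and the column version of the first case applies.
\item If every three consecutive rows carry at most two colors, then two consecutive rows are constant of the same color. Otherwise every pair of consecutive rows uses exactly two colors, these sets must all coincide, and this contradicts the use of at least three colors.
\end{itemize}
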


    \begin{proof}
      Fix an $r$-coloring $\chi$ of~$\Z^2\!$.
      In what follows (unless explicitly stated otherwise) by a \emph{ line} we mean a horizontal line;
      by a \emph{rainbow triangle} we mean the vertices of a triangle of area~$1/2$
      whose colors are pairwise distinct;
      and by a \emph{monochromatic rectangle} we
      mean a set of points of the same color which form the vertices of a rectangle
      whose sides are parallel to the axes of~$\Z^2\!$.

      \medskip
      \emph{Case 1.} Some line $\ell$ contains points of $3$~distinct colors
      (i.e., $\bigl|\chi(\ell)\bigr|\ge 3$).
      By renaming the colors if necessary, we may assume that there is a  pair  of consecutive points
      in $\ell$ colored with~$\{a,b\}$ and a second pair of consecutive points colored with  $\{a',b'\,\}$
with $\{a,b\}\neq \{a',b'\,\}$.
      Then any occurrence of a color not in $\{a,b\}$ or not in $\{a',b'\,\}$  in a line adjacent to~$\ell$
      produces a rainbow triangle of area $1/2$ with the first or the second pair respectively.
      We conclude that if a rainbow triangle is not present, then both lines adjacent to $\ell$ receive uniquely a color in $\{a,b\}\cap \{c,d\}$. Since this intersection has size at most~$1$, all points in the lines adjacent to~$\ell$ are assigned the same color, yielding monochromatic rectangles of every even area.

      \medskip
      \emph{Case 2.} Some pair of consecutive lines $\ell_1,\, \ell_2$ contain points of at least $3$ distinct
colors
      (i.e., $\bigl|\chi(\ell_1\cup\ell_2)\bigr|\ge 3$ for some pair of consecutive lines~$\ell_1,\,\ell_2$).
      We may assume none of the lines contains 3~colors, for otherwise we are done by Case~1.
      Without loss of generality, we may also assume that $\ell_1$ contains adjacent points of colors~$a,\,b$,
      and $\ell_2$ contains a point of color~$c$. These three points form a rainbow triangle.

      \medskip
      \emph{Case 3.} There exist three consecutive lines $\ell_1,\,\ell_2$, and~$\ell_3$ that
      together contain at least~$3$ colors
      (i.e., $\bigl|\chi(\ell_1\cup\ell_2\cup\ell_3)\bigr|\ge 3$).
      We may assume that $\{a,\,b,\,c\}\subseteq \chi(\ell_1\cup\ell_2\cup\ell_3)$, that neither
      of the previous cases applies (in particular, $\bigl|\chi(\ell_i)\bigr|\le 2$ for each $i\in\{1,\,2,\,3\}$),
      and that $\ell_2$ lies between $\ell_1$ and~$\ell_3$.

      If each of these lines contains a single color, then we obtain a rainbow triangle.\footnote{{In fact,
      rainbow triangles of every  area multiple of $1/2$}.}
      Otherwise, one of these lines contains two colors.
      We may assume that $\bigl|\chi(\ell_2)\bigr|=1$, %: indeed, if $\bigl|\chi(\ell_2)\bigr|>2$ we are done by Case~1,
      as if $\bigl|\chi(\ell_2)\bigr|=2$ then either $\ell_1$ or~$\ell_3$ contains
      a third color and we are done by Case~2.
      In particular, we may suppose without loss of generality that $\chi(\ell_1)=\{a,b\}$, that $\chi(\ell_2)=\{a\}$ and $c\in\chi(\ell_3)$.

      Note that if $\chi(\ell_3)\not\subseteq \{a,c\}$,
      then $\ell_2$ and~$\ell_3$ together contain 3~colors, and we are done by Case~2.
      So $\chi(\ell_3)$ is either $\{c\}$ or~$\{a,c\}$. In the former case we clearly
      have rainbow triangles.\footnote{{Of every  area multiple of $1/2$.}}
      So suppose $\chi(\ell_3)=\{a,c\}$.
      We shall show that if we have no rainbow triangle of area~$1/2$ with vertices in~$\ell_1\cup\ell_2\cup\ell_3$,
      then we can find monochromatic rectangles.\footnote{Of every area.}
      Fix consecutive vertical lines $\nu_1$, $\nu_2$ such that $\chi(\nu_1\cap\ell_1)=a$ and $\chi(\nu_2\cap\ell_1)=b$.
      If~$\chi(\nu_2\cap \ell_3)=c$, then $\nu_2$ contains 3~colors and we are done by Case~1 (as its argument
      works replacing horizontal lines by vertical lines); moreover, we can not have $\chi(\nu_1\cap \ell_3)=c$ else we
      obtain a rainbow triangle with vertices {$\nu_2\cap\ell_1$, $\nu_2\cap \ell_2$} and $\nu_1\cap \ell_3$.
      It follows that {$\chi(\nu_1\cap\ell_i)=a$} for each $i\in\{1,\,2,\,3\}$.

      To conclude, fix $d\in\N$ and a vertical line $\nu'$ at distance $d$ from $\nu_1$;
      we shall find a monochromatic rectangle of area~$d$ or a rainbow triangle.
      If $\chi(\nu'\!\cap\ell_1)=a$, then $\nu_1\cap \ell_1,\,\nu'\!\cap \ell_1,\,\nu_1\cap \ell_2$ and
      ~$\nu'\!\cap \ell_2$ are the vertices
      of a monochromatic rectangle of area~$d$. And if $\chi(\nu'\cap \ell_1)=b$, then as before we must have $\chi(\nu'\cap \ell_3)=a$
      (as otherwise the line $\nu'$ contains~3 distinct colors and we are done by Case~1).
      Hence $\nu_1\cap\ell_2,\,\nu'\!\cap\ell_2,\,\nu_1\cap\ell_3$, and~$\nu'\!\cap\ell_3$ form a monochromatic rectangle of area~$d$ as required.

      \emph{Case 4.} None of the conditions in the three above cases are satisfied. Then every three consecutive lines contain only at most two colors.  Let $L$ be a maximal set of consecutive lines containing only two colors $a$ and $b$, say.  Then we find three consecutive lines $\ell_1,\,\ell_2$, and~$\ell_3$, such that $\ell_1,\ell_2\in L$, $\ell_3\not\in L$, and $\ell_3$ contains a third color, say~$c$. Since no set of three consecutive lines out of these three contains $3$~colors, we have that $\ell_1$ and~$\ell_2$ are monochromatic of the same color. Hence we find monochromatic rectangles of every integer area.
    \end{proof}
}

{Next results focus on the existence of only rainbow triangles of given area. }

\begin{proposition} \label{prop1} Given any $r$-coloring of the plane, with $r \geq 3$, such that the set
$\{(n,0): n \in \Z^+\}$ is not monochromatic, and the set $\{(n,2): n \in \Z^+\}$ receives at least three colors,
there are Rainbow triangles of area $1$.
\end{proposition}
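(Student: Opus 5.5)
Since the hypothesis on the line $y=0$ only says it is not monochromatic, I fix two consecutive points on it with distinct colors. Because $\{(n,0):n\in\Z^+\}$ is not monochromatic, there is some $m\in\Z^+$ with $\chi(m,0)\neq\chi(m+1,0)$; call these colors $a$ and $b$. Any point $(x,2)$ forms with $(m,0)$ and $(m+1,0)$ a triangle with base $1$ and height $2$, hence of area exactly $1$, independently of $x$.

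The key step is to use the line $y=2$. This line contains at least three colors among its points with positive abscissa, so at least one of them, say $c$, lies outside $\{a,b\}$. Pick $n\in\Z^+$ with $\chi(n,2)=c$. Then $(m,0)$, $(m+1,0)$ and $(n,2)$ have pairwise distinct colors $a,b,c$. The area is $\tfrac12\cdot 1\cdot 2=1$, so they form a rainbow triangle of area~$1$, and the proof ends there.

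There is no real obstacle. The only point to verify is that ``three colors on $y=2$'' forces a color outside a prescribed $2$-set, which is immediate. The argument even shows the hypothesis could be weakened: it suffices that the line $y=2$ contains a color different from the two colors at some adjacent pair on $y=0$.

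In the write-up I will also note a second route. Instead of consecutive points on $y=0$, one could use two consecutive points on $y=2$ with distinct colors together with a point on $y=0$ of a third color. The first route is simpler, so I will present only that one.
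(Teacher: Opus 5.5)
Your proof is correct and is essentially the paper's argument: an adjacent pair $(m,0),(m+1,0)$ with distinct colors $a,b$, together with a point on $y=2$ of a third color, gives a triangle of base $1$ and height $2$, hence of area $1$. Your aside about the second route would need the pair on $y=2$ to be chosen carefully, since the line $y=0$ may carry only two colors; the main argument does not rely on it.
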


\begin{proof} Since the set $\{(n,0): n \in \Z^+\}$ is not monochromatic, there are two points $A= (m,0)$, and~$B= (m+1, 0)$
such that $A,B$ receive different colors, say $c_1$ and $c_2$ respectively. Since
the set $\{(n,2): n \in \Z^+\}$ receives at least three colors, there is a point $(s,2)$ of some color other than
 $c_1$ and $c_2$, say $c_3$.
So, we get a Rainbow triangle $ABC$ of area $1$.
\end{proof}
\begin{remark}
It is clear that if at least two colors appear in
the set $\{(n,0): n \in \Z^+\}$ infinitely many times, or at least three colors appear infinitely many times in the set
$\{(n,2): n \in \Z^+\}$,
then there are infinitely many Rainbow triangle  of area $1$ between the parallel lines $y=0$ and~$y=2$.
\end{remark}

\begin{proposition}
For a large positive integer $N$, given any $r$-coloring of the plane, with $r \geq 3$, such that on the
set ${\bf S}= \{(n,0): n \in \Z^+,\, 1 \leq n \leq N\}$ at least three color classes have
at least $(N+4)/6$ points in  them and at least three colors appear on every line $y=k$, $k \geq 1$,
then there are points $A,B,C$ on the line $y=0$ and points $D,E$ on a line $y=t$ such that $ABD$ and $BCE$
are Rainbow triangles of~area~$N!$.
\end{proposition}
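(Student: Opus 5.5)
Apply Theorem~\ref{thm-af} (Axenovich--Fon der Flaas) to the coloring of ${\bf S}$, identified with $[N]$ via $(n,0)\mapsto n$. Choose three colors, say $c_1,c_2,c_3$, whose classes on ${\bf S}$ each have at least $(N+4)/6$ points. If further colors occur on ${\bf S}$, merge every point of another color into one of the three classes (say $c_1$). This gives a genuine $3$-coloring of $[N]$ in which every class is at least as large as required, so there is a $3$-term arithmetic progression $a,\,a+d,\,a+2d$ with $1\le d<N/2$ that is rainbow for the merged coloring. Strictly, the strict inequality in Theorem~\ref{thm-af} has to be compared with the stated hypothesis ``at least $(N+4)/6$''. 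Since $N$ is large, the cases of Theorem~\ref{thm-af} can be matched, or the hypothesis can be read as the threshold of that theorem. I will take this for granted.

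A merged-rainbow progression need not be rainbow in the original coloring only if one of its terms has a merged color. To avoid this, merge all extra colors into a class, and note that a rainbow AP in the merged coloring still has three distinct original colors: the other two terms carry the distinct original colors of their unmerged classes, and the merged term carries $c_1$ or some other color, which differs from both of them in either case. So set $A=(a,0)$, $B=(a+d,0)$, $C=(a+2d,0)$, with pairwise distinct colors $\alpha,\beta,\gamma$.

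Next choose the height $t$ so that the triangles have area $N!$. With base $|AB|=|BC|=d$ and apex on $y=t$, the area is $dt/2$. Take $t=2N!/d$, which is a positive integer because $d\le N$ divides $N!$. Then any $D,E$ on the line $y=t$ give $\operatorname{area}(ABD)=\operatorname{area}(BCE)=N!$.

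It remains to pick colors on $y=t$. We need a point $D$ whose color lies outside $\{\alpha,\beta\}$ and a point $E$ whose color lies outside $\{\beta,\gamma\}$. Since $t\ge1$, the line $y=t$ carries at least three colors by hypothesis. At least one of them avoids $\{\alpha,\beta\}$, so $D$ exists, and likewise $E$ exists. Then $ABD$ and $BCE$ are rainbow. The only delicate step is the first one: checking that the merging preserves the hypotheses of Theorem~\ref{thm-af} and still yields a progression rainbow in the original coloring, including the strict-versus-nonstrict threshold. The height step and the choice of $D,E$ are immediate.
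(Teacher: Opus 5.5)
Your proof is correct and follows the paper's argument: merge the extra colors into one of the three large classes, and apply Theorem~\ref{thm-af} to get a progression $A,B,C$ that is rainbow in the merged coloring, hence in the original one. Then take $t=2N!/d$ and choose $D,E$ on $y=t$ avoiding the colors of $A,B$ and of $B,C$ respectively. The threshold worry you raise is not settled by $N$ being large, but with Theorem~\ref{thm-af} as stated in the paper it does not arise for any $N$: $\lfloor (N+2)/6\rfloor<(N+4)/6$, and when $N\equiv 0\pmod 6$ the number $(N+4)/6$ is not an integer, so ``at least $(N+4)/6$'' already gives the strict inequality (the paper uses this without comment).
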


\begin{proof}
Let each of the color classes  $c_1,\,c_2,\,c_3$ appearing on ${\bf S}$ have at least $(N+4)/6$ points in  them.
We merge the remaining colors (if any) with the color $c_3$ to obtain a new color $c$.
By Theorem \ref{thm-af} of Axenovich and Fon der Flaass, under the $3$-coloring $c_1,\,c_2,\,c$ (and hence clearly under the
original coloring), the set ${\bf S}= \{(n,0): n \in \Z^+,\, 1 \leq n \leq N\}$ contains a rainbow
$3$-term arithmetic progression, say $A,\,B,\,C$. Let the lengths $AB$ and $BC$ be $s\leq \frac{N}{2}$.

Take $t=\frac{2N!}{s}$. Since at least three colors appear on  the line $L:y=t$, taking a point $D$ on $L$
having a color different from those of the points $A,\,B$ and taking a point $E$ on $L$
having a color different from those of the points $B,\,C$, we are through.
\end{proof}

\section{A canonical result for parallelograms and trapezoids}

{The next one is a canonical result for the problem of geometric configurations of given area. It is not difficult to
describe colorings which avoid monochromatic parallelograms and rainbow trapezoids. {For instance, let $D_r$ be a coloring of~$\Z^2$ such that each point $(x,y)$ gets color~$r$ where $r\equiv (y-x)\bmod 3$.
 It is easy to check that under $D_r$ the vertices of every rectangle of area~$2$ are colored using precisely $3$~colors.}
Therefore,  some condition has to be placed on the colorings.
  In view of Theorem \ref{thm-af},  we call a $3$-coloring of the integer lattice $\Z^2$ \emph{rainbow
feasible} if there is
an integer $N$ such that in every interval of length $N$ in every horizontal line each color appears at least $(N+4)/6$
times. In other words, we place the condition on the coloring that ensures the existence of rainbow $3$-term arithmetic
progressions.}

\begin{theorem} \label{mono-rainbow}
Let $\chi$ be a rainbow feasible coloring of the planar integer lattice $\Z^2$. There is an integer $A$
independent of the coloring and a point $z\in \Z^2$ such that there is either a monochromatic parallelogram of area~$A$
or a rainbow trapezoid of area~$A$, with $z$ one of its vertices.
\end{theorem}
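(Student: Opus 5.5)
The plan is to exploit the freedom in the statement to choose the vertex $z$ depending on $\chi$, while fixing the area $A$ once and for all. I take $A=3$ and aim to produce a rainbow trapezoid with two horizontal parallel sides of lengths $1$ and $2$ and height $2$. Its area is $\frac{(1+2)\cdot 2}{2}=3$, and it is a genuine trapezoid rather than a parallelogram, since its parallel sides have different lengths. On this route the monochromatic alternative in the statement is never needed. Because a trapezoid has four vertices and $\chi$ uses only three colors, rainbow here means, by the definition of rainbow in the paper, that all three colors appear among the four vertices.

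\emph{Step 1: every horizontal line sees all three colors.} By rainbow feasibility there is an $N$ such that every interval of length $N$ on every horizontal line contains each color at least $(N+4)/6>0$ times. This uses only the existence of $N$ and none of the Axenovich--Fon der Flaass structure from Theorem~\ref{thm-af}.

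\emph{Step 2: choose $z$.} The line $y=0$ is not monochromatic, so somewhere on it two consecutive points receive different colors. Fix such a pair $z=(m,0)$ and $z'=(m+1,0)$, and rename the colors so that $\chi(z)=a$ and $\chi(z')=b$. This is the only step where $z$ depends on the coloring, which the statement allows.

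\emph{Step 3: the top side.} By Step~1 the line $y=2$ contains a point $w=(q,2)$ of the third color $c$. Set $w'=(q+2,2)$. The points $z,z',w',w$ are the vertices of a trapezoid: the side $zz'$ is horizontal of length $1$, the side $ww'$ is horizontal of length $2$, and the two sides lie on distinct parallel lines at distance $2$. Its area is therefore $3=A$. Its vertices carry the colors $a$, $b$ and $c$, together with $\chi(w')$ whatever that is, so the trapezoid is rainbow and has $z$ as a vertex.

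\emph{Main obstacle.} The delicate point is not combinatorial but interpretive. One must make sure that the chosen $A$ is genuinely uniform over all rainbow feasible colorings, and that the figure meets the paper's notion of trapezoid. If parallelograms were counted as trapezoids, one could already take $A=1$ with both parallel sides of length $1$; choosing sides $1$ and $2$ avoids that ambiguity. If the intended reading instead fixed $z$ in advance, for instance $z=(0,0)$, I would adapt Step~2 by splitting into cases. If $\chi(1,0)\ne\chi(0,0)$ or $\chi(-1,0)\ne\chi(0,0)$, the argument above runs verbatim. Otherwise I would use the rows $y=\pm1$ and $y=\pm2$ either to find a pair of the two missing colors, giving a rainbow trapezoid, or to force repeated color $a$, giving a monochromatic parallelogram of area $A$ through $z$. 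That case analysis is where I expect the real work to lie under the stricter reading.
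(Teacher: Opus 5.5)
Your proof is correct, and it takes a genuinely different and much more elementary route than the paper's. You read the quantifiers as stated: $A$ must not depend on $\chi$, but $z$ may. Under that reading you use only the weakest consequence of rainbow feasibility, namely that every horizontal line carries all three colors because $(N+4)/6>0$. You combine this with the fact that for four points under a $3$-coloring, rainbow just means all three colors occur. The construction is essentially Proposition~\ref{prop1} with a fourth vertex added. Your quadrilateral $(m,0),(m+1,0),(q+2,2),(q,2)$ is a convex trapezoid with exactly one pair of parallel sides and area $3$.

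The paper works much harder. It applies Theorem~\ref{thm-af} in every block $L_{ij}$ and records the type $(x,d,\sigma)$ of a rainbow $3$-term progression there. It then uses van der Waerden on rows and pigeonhole on blocks to place three progressions of the same type in parallelogram position. Finally it takes either the middle terms together with $z$ (monochromatic parallelogram) or $\{\alpha_1,\beta_2,\gamma_3,z\}$ (rainbow trapezoid).

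Comparing the two routes:
\begin{itemize}
\item The paper's area is $W!\,t!$ (its text writes $W!k!$), where $t=2N^2$ and $W$ is a van der Waerden number. This depends on $N$, hence on the coloring. Your $A=3$ is uniform over all rainbow feasible colorings, so it matches ``independent of the coloring'' more faithfully.
\item The points $\alpha_1,\beta_2,\gamma_3$ already carry the distinct colors $\sigma(1),\sigma(2),\sigma(3)$. So the paper's case split on $\chi(z)$ is not really used either, which agrees with your observation that the monochromatic alternative is never needed.
\item The paper's route does align several same-type rainbow progressions. That is the kind of structure a stronger canonical statement would require. For the theorem as written, however, it gives nothing your argument does not.
\end{itemize}

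Your closing paragraph about a fixed $z$ such as $(0,0)$ is only a sketch and would not count as a proof. It is not needed, though: the paper's own $z=(kN+x+d,\,u+md_P)$ is built from data that depend on $\chi$.
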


\begin{proof} For each $(i,j)\in \Z^2$, let
\[
L_{ij}=\{(x,y)\in \Z^2: iN\le x \le (i+1)N-1, y=j\}
\]
be the segment of length $N$ in the horizontal line $y=j$ with first coordinate in the interval $[iN,(i+1)N-1]$. By
Theorem \ref{thm-af}, every $L_{ij}$ contains a rainbow  $3$-term arithmetic progression.  Each such rainbow $AP(3)$
in $L_{ij}$  can be identified by a triple $(x,d,\sigma)\in [N]\times [N/3]\times \text{Sym}(3)$ with  $iN+x$ its initial
point, $d$  its difference and $\sigma$ a permutation of $\{1,2,3\}$ such that $\chi (iN+x,j)=\sigma(1),
\chi (iN+x+d,j)=\sigma(2)$ and $\chi(iN+x+2d,j)=\sigma (3)$. Let $T=\{(x,d,\sigma)\in [N]\times [N/3]\times \text{Sym}(3)\}$
be the set of all possible triples  and denote by  $t=|T|=2N^2$ its cardinality.

Let $\chi_0$ be a coloring of the planar lattice by giving to $(i,j)$ a triple $(x,d,\sigma)$ in $T$ of a rainbow $AP(3)$
occurring in the interval $L_{i,j}$.
Define a coloring $\chi_1$ of $\N$ with the colors in $T$ where $\chi_1(j)$ is the set $\{\chi_0(i,j): i\in \N\}$ of triples
$(x,d,\sigma)$ appearing in rainbow $3$-term arithmetic progressions  in the segments $(L_{ij}: i\in \N)$. By van der
Waerden theorem there is a $\chi_1$-monochromatic arithmetic progression $P$ of length $t!+1$ in $[W]$ where $W$ is the
van der Waerden number $W=vdW(t!+1, 2^t-1)$. Let $d_P$ be the difference of $P$ and let $u$ be its initial term. define
\[
D=\frac{W!}{d_P}.
\]
Consider the $t+1$ segments
\[
L_{0,u}, L_{D,u}, \cdots , L_{tD, u}.
\]
By pigeonhole, two of them, say $L_{iD,u}, L_{jD,u}$,  have the same color $(x,d,\sigma)$ from the set $T$ under $\chi_0$.
Define
\[
m=\frac{t!}{j-i}.
\]
By the definition of $\chi_1$ and $P$, there is a segment $L_{k,u+md_P}$ in the horizontal line $L_{u+md_P}$ with the same
color $(x,d,\sigma)$ under $\chi_0$ as the two segments above. Denote by $(\alpha_1,\alpha_2,\alpha_3)$,
$(\beta_1,\beta_2,\beta_3)$ and $(\gamma_1,\gamma_2,\gamma_3)$  the three rainbow arithmetic progressions of
type $(x,d,\sigma)$  in the segments $L_{iD,u}, L_{jD,u}$ and $L_{k,u+md_P}$ respectively.

Let $z$ be the point $z=(kN+x+d,u+md_P)$ in the same line.

If $\chi (z)=\sigma (2)$ then the four points
\[
\{\alpha_2,\beta_2,\gamma_2,z\}
\]
form a $\chi$-monochromatic parallelogram with area
\[
A=D(j-i)md_P=W!k!.
\]
Otherwise, if $\chi (z)\neq \sigma(2)$, then the four points
\[
\{\alpha_1,\beta_2,\gamma_3, z\}
\]
form a trapezoid with three vertices of different colors and area
\[
A=D(j-i)md_P=W!k!.
\]
\begin{figure}[h]
\begin{tikzpicture}[scale=1]
\draw (0,0)--(10,0);
\draw (0,4)--(10,4);
\foreach \i in {1,2,3}
{
\draw[fill] (1+\i,0) circle(2pt);
\draw[fill] (5+\i,0) circle(2pt);
\draw[fill] (2+\i,4) circle(2pt);
}
\draw[fill] (8,4) circle(2pt);
\node[above] at (3,4) {\footnotesize{$\alpha_1$}};
\node[above] at (4,4) {\footnotesize{$\alpha_2$}};
\node[above] at (5,4) {\footnotesize{$\alpha_3$}};
\node[above] at (8,4) {\footnotesize{z}};

\node[below] at (2,0) {\footnotesize{$\beta_1$}};
\node[below] at (3,0) {\footnotesize{$\beta_2$}};
\node[below] at (4,0) {\footnotesize{$\beta_3$}};
\node[below] at (6,0) {\footnotesize{$\gamma_1$}};
\node[below] at (7,0) {\footnotesize{$\gamma_2$}};
\node[below] at (8,0) {\footnotesize{$\gamma_3$}};

\draw[dashed] (3,0)--(4,4)--(8,4)--(7,0)--(3,0);
\draw[dotted] (3,0)--(3,4)--(8,4)--(8,0)--(3,0);

\end{tikzpicture}
\caption{An illustration of the appearance of the monochromatic or rainbow structure in the proof of Theorem \ref{mono-rainbow}.}
\label{fig:parallelogram-trapezoid}
\end{figure}
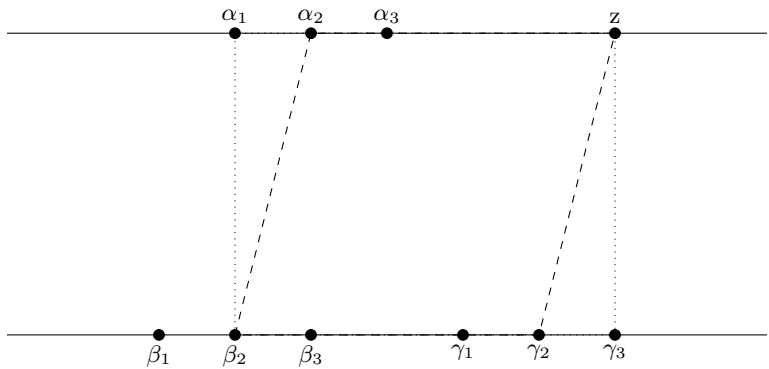
This completes the proof (see Figure~\ref{fig:parallelogram-trapezoid}).
\end{proof}

\begin{proof} (Second proof using Graham's theorem)
For each $(i,j)\in \Z^2$, let
\[
L_{ij}=\{(x,y)\in \Z^2: iN\le x \le (i+1)N-1, y=j\}
\]
be the segment of length $N$ in the horizontal line $y=j$ with first coordinate in the interval $[iN,(i+1)N-1]$.
By Theorem \ref{thm-af}, every $L_{ij}$ contains a rainbow  $3$-term arithmetic progression.  Each such rainbow
$AP(3)$ in $L_{ij}$  can be identified by a triple $(x,d,\sigma)\in [N]\times [N/3]\times \text{Sym}(3)$ with
$iN+x$ its initial point, $d$  its difference and $\sigma$ a permutation of $\{1,2,3\}$ such that $\chi (iN+x,j)=\sigma(1),
\chi (iN+x+d,j)=\sigma(2)$ and $\chi(iN+x+2d,j)=\sigma (3)$. Let $T=\{(x,d,\sigma)\in [N]\times [N/3]\times \text{Sym}(3)\}$
be the set of all possible triples  and denote by  $t=|T|=2N^2$ its cardinality.

Let $\chi_0$ be a coloring of the planar lattice by giving to $(i,j)$ a triple $(x,d,\sigma)$ in $T$ of a rainbow $AP(3)$
occurring in the interval $L_{i,j}$. By Graham's theorem there is a monochromatic triangle with given area $A$. Moreover
we may assume that two sides of the triangle are parallel to the axes. Now we complete the argument as before by considering
the point which completes a rectangle with the second element of each rainbow $AP(3)$ if the color agrees with them, or a
trapezoid of the same area with the points switched.
\end{proof}

For our next result, we shall require some stronger conditions.
Apart from assuming that the given $r$-coloring ($r \geq 4$)
at least three color classes have at least $(n+4)/6$ points on the line $y=0$, we shall assume that
on each infinite arithmetic progression (with any positive real number as the common difference)
 on a line $y=k$, with $k \geq 1$, at least four colors appear.
As will be clear from the proof of our result  we can replace the lines
by a suitable collection of parallel lines.

\begin{proposition}
For every even $N\ge 6$, given any $r$-coloring of the plane, with $r \geq 4$,
such that on the
set ${\bf S}= \{(n,0): n \in \Z^+\, 1 \leq n \leq N\}$, at least three color classes have
at least $(N+4)/6$ points in  them and such that each infinite arithmetic progression on a horizontal line $y=k$, with $k \geq 1$,
at least four colors appear, then there is either a Rainbow parallelogram
of area $N!$ or a Rainbow trapezium of area~$\frac{3}{2}N!$.
\end{proposition}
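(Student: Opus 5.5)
The plan follows the proof of the preceding proposition on rainbow triangles of area $N!$. First merge all colors other than $c_1,c_2,c_3$ into $c_3$, the three colors whose classes on ${\bf S}$ have at least $(N+4)/6$ points. Theorem~\ref{thm-af} then gives a rainbow $3$-term arithmetic progression $A=(a,0)$, $B=(a+s,0)$, $C=(a+2s,0)$ in ${\bf S}$, colored $c_1,c_2,c_3$ in some order, with $1\le s\le N/2$. In the original coloring these three points carry three distinct colors, and there are $r\ge 4$ colors. Fix the height $h=N!/s$, which is an integer since $s\le N$, and work on the line $L\colon y=h$.

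The quadrilaterals will have their bottom side on the line $y=0$, using two or three of $A,B,C$, and their top side on $L$. A parallelogram with base $AB$ (length $s$) and height $h$ has area $sh=N!$. A trapezium with bases $AC$ (length $2s$) and a top segment of length $s$ has area $\tfrac{1}{2}(2s+s)h=\tfrac32 N!$. So we should look on $L$ for two points $D=(p,h)$ and $E=(p+s,h)$ at distance $s$ such that one of the following holds.
\begin{itemize}
\item[(i)] $D$ and $E$ have two distinct colors, both different from $\chi(A)$ and $\chi(B)$. Then $A,B,E,D$ is a rainbow parallelogram of area $N!$ (all four colors distinct).
\item[(ii)] Some point of $L$ has the fourth color, that is, a color outside $\{\chi(A),\chi(B),\chi(C)\}$, and can serve as an endpoint of a top side of length $s$ above $AC$. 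Then $A,C$ together with the top side give a trapezium of area $\tfrac32N!$ whose vertices carry all available colors among its four vertices, or are pairwise distinct.
\end{itemize}
The hypothesis is used through the arithmetic progressions $\{(p+ns,h): n\in\Z^+\}$. For every residue class of $p$ modulo $s$, this progression on $L$ contains at least four colors. In particular, each such progression contains a point colored $c_4\notin\{c_1,c_2,c_3\}$ and points of every one of $c_1,c_2,c_3$.

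The case analysis runs as follows. Walk along one progression $P=\{(p+ns,h)\}$. Consecutive points of $P$ are at distance exactly $s$, so every consecutive pair $(D,E)$ is a candidate top side, both for a parallelogram over $AB$ and for a trapezium over $AC$. Let the colors be $\chi(A)=\alpha$, $\chi(B)=\beta$, $\chi(C)=\gamma$. Since $P$ contains four colors, there is a consecutive pair in $P$ with distinct colors $\{x,y\}$. If $\{x,y\}\cap\{\alpha,\beta\}=\emptyset$, we have a rainbow parallelogram. Otherwise every consecutive bichromatic pair meets $\{\alpha,\beta\}$. Then consider the pairs adjacent to the points colored $\gamma$ or $c_4$: if a $\gamma$-point and a $c_4$-point are ever consecutive, they give the parallelogram $A,B,E,D$ with four distinct colors. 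If they are never consecutive, some point of color $c_4$ (or $\gamma$) is adjacent to a point of color $\alpha$ or $\beta$. In that situation we instead use the trapezium on $A$, $C$ and that pair, possibly reflected so that the pair is the top side. This gives either four distinct colors, for example $\{\alpha,\gamma\}$ on the bottom and $\{\beta,c_4\}$ on top, or, after re-choosing the base between $AB$, $BC$ and $AC$, a rainbow configuration.

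The main obstacle is exactly this last combinatorial step: guaranteeing that some consecutive pair on $L$ has the right colors relative to the chosen base. The flexibility available is to choose among three bases ($AB$ and $BC$ for parallelograms of area $N!$, $AC$ for trapezia of area $\tfrac32N!$) and to shift the progression. I would try first to show by contradiction that if no good pair exists, then the color sequence along $P$ is forced to use at most three colors, contradicting the four-colors hypothesis.
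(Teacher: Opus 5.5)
Your argument is correct and is essentially the paper's. On the line $y=N!/s$ you take a consecutive pair of the progression of difference $s$, one endpoint having a color $c_4\notin\{\alpha,\beta,\gamma\}$ and the other a different color, and you choose the base $BC$, $AC$ or $AB$ according as that other color is $\alpha$, $\beta$ or $\gamma$. If it is a fifth color, any parallelogram base works; your first branch covers this and the paper omits it.

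That case split already finishes the proof, so your closing ``main obstacle'' and the proposed contradiction argument are unnecessary. Your side claim that every progression contains all of $c_1,c_2,c_3$ is false for $r\ge 5$, but it is never used.
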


\begin{proof}
On the set ${\bf S}= \{(n,0): n \in \Z^+\!,\, 1 \leq n \leq N\}$,
if each of the color classes  $c_1,\,c_2,\,c_3$  has at least $(N+4)/6$ points in  them, as before
we merge the remaining colors with the color $c_3$ to obtain a new color $c$.
By Theorem \ref{thm-af} of Axenovich and Fon der Flaass, under the $3$-coloring $c_1,\,c_2,\,c$ (and hence clearly under the
original coloring), the set ${\bf S}$  contains a Rainbow
$3$-term arithmetic progression, $A,\,B,\,C$ with three colors $c'_1,\,c'_2,\,c'_{3}$ respectively.
Let $d\leq \frac{N}{2}$ be the common difference of the AP $A,\,B,\,C$.

Considering an infinite AP with common difference $d$ on the line $y=t$ where $t=\frac{N!}{d}$,
and a fourth color say $c_4$ appearing on it,
there are two points, say $E$ and~$F$,  at a distance $d$ on $y=t$ with different colors,
one of them having color $c_4$.

If the other color happens to be $c'_1$ or $c'_3$, we shall have a Rainbow parallelogram $BCEF$ or $ABEF$
of area $N!$ and if the other color happens to be $c'_2$, we have a Rainbow trapezium $ACEF$ of area
$\frac{3}{2}N!$.
\end{proof}

\smallskip

We finally observe that, in searching for a canonical Ramsey theorem for fixed-area rectangles in~$\Z^2$,
it is not possible to guarantee either monochromatic or rainbow rectangles, at least when this area is not a multiple of~$4$.

\begin{proposition}
  For every integer $A$ that is not divisible by~$4$, and every $r\ge 2$,
  there exists an $r$-coloring of~$\Z^2$ that avoids
  both monochromatic and rainbow rectangles of area~$A$ whose sides
  are parallel to the axes.
\end{proposition}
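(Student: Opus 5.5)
The plan is to use the chessboard (parity) coloring of $\Z^2$. The whole argument then reduces to one arithmetic remark about the side lengths of an axis-parallel rectangle of area $A$.

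\emph{Convention.} As in the proof of Theorem~\ref{t:rb-triangle-vs-even-mon-rects}, I take a rainbow rectangle to mean four vertices with pairwise distinct colors. I take an $r$-coloring to use at most $r$ colors. Under the literal convention $|\chi(Y)|=\min\{|\chi(X)|,|Y|\}$, a coloring that uses exactly two colors makes every non-monochromatic set rainbow, so the statement cannot hold in that reading. The intended reading must therefore be the one above.

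\emph{Arithmetic step.} An axis-parallel lattice rectangle of area $A$ has vertices $(x,y),(x+a,y),(x,y+b),(x+a,y+b)$ with positive integers $a,b$ and $ab=A$. If $a$ and $b$ were both even, then $4\mid A$. So by hypothesis at least one side length is odd; by symmetry, say $a$ is odd.

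\emph{Construction and verification.} Define $\chi(x,y)=1$ if $x+y$ is even and $\chi(x,y)=2$ otherwise. This is an $r$-coloring for every $r\ge 2$.
\begin{itemize}
\item No rectangle of area $A$ is monochromatic. Since $a$ is odd, $(x,y)$ and $(x+a,y)$ have coordinate sums of different parity, so they receive different colors.
\item No rectangle of area $A$ is rainbow. The coloring uses only two colors, so four pairwise distinct colors can never occur among its vertices.
\end{itemize}

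\emph{Main obstacle.} The only delicate point is the definitional one about what ``rainbow'' means. If one insists that all $r$ colors actually appear, I would try to keep the parity structure and spend the extra colors on one parity class only. For example, color the points with $x+y$ even by color $1$, and the points with $x+y$ odd by colors $2,\dots,r$ according to $x \bmod (r-1)$. Each area-$A$ rectangle still contains two vertices of color $1$ (the diagonal pair when $b$ is odd), so it is never rainbow in the ``four distinct colors'' sense. The step to check carefully is that it is also never monochromatic. When $b$ is odd this is immediate, since both parity classes appear among the vertices. When $b$ is even, the vertices $(x,y)$ and $(x+a,y)$ still lie in opposite parity classes because $a$ is odd, and a color in $\{2,\dots,r\}$ never equals color $1$. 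So monochromaticity is again excluded.
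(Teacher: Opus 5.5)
Your argument is correct and is essentially the paper's. The paper likewise gives every point of even coordinate sum one color and the odd-sum points other colors: for $r=2$ its coloring is exactly your chessboard, in general it is of the type of your fallback variant (as its closing remark notes), and it relies on the same fact that $4\nmid A$ forces an odd side, so exactly two vertices have even coordinate sum. Your reading of ``rainbow'' as four pairwise distinct colors is also what the paper's proof tacitly uses, and in your variant the two color-$1$ vertices form a side of length $b$ when $b$ is even (rather than a diagonal), so that step holds in both cases.
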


In the following proof a rectangle means a collection of four distinct elements of~$\Z^2$
which are the vertices of a rectangle with sides parallel to the axes.

\begin{proof}
  Consider the following coloring
  \[
    \chi(x,y) =
    \begin{cases}
      \hfil r-1            & \text{if $x + y$ is even,} \\
      \displaystyle x+y - (r-1)\left\lfloor\frac{x+y}{(r-1)}\right\rfloor   & \text{otherwise}
    \end{cases}
  \]
  and let $R=\{(x,y),\,(x+w,y),\,(x,y+h),\,(x+w,y+h)\}$ be an arbitrary
  rectangle with integer coordinates and area~$A=hw$.
  We claim that precisely two vertices of~$R$ get color~$r-1$.

  Note that the only points receiving color~$r-1$ under~$\chi$ are those whose coordinates' sum is even.
  Moreover, since $A\not\equiv 0\pmod 4$, we have that $h$ and~$w$ cannot both be even.
  So, suppose without loss of generality that~$h$ is odd; this means that either
  $w$ is even or $h+w$ is even. In other words, either
  \[
    \begin{cases}
      x+y   & \equiv x+y+w   \\
      x+y+h & \equiv x+y+w+h
    \end{cases}
    \qquad\text{or}\qquad
    \begin{cases}
      x+y   & \equiv x+y+w+h \\
      x+y+h & \equiv x+y+w
    \end{cases}
  \]
  where all equivalences are mod~$2$ and
  the last inequality is immediate from the fact that $h+w\equiv w-h \pmod 2$. By the definition of~$\chi$, we
  have that each rectangle of area~$s$ must therefore be assigned either $2$ or~$3$ colors by~$\chi$ as required.
\end{proof}

It is easy to check that many other colorings achieve a similar effect.
(For instance, we may assign color `red' to points in~$\Z^2$ of even coordinate-sum,
and choose an arbitrary coloring of the remaining points using colors
other than `red'.)

\end{document}